\title{On the classification of vertex-transitive structures}
\author{John Clemens}
\address{John Clemens, Boise State University, 1910 University Dr, Boise, ID 83725}
\email{johnclemens@boisestate.edu}
\author{Samuel Coskey}
\address{Samuel Coskey, Boise State University, 1910 University Dr, Boise, ID 83725}
\email{scoskey@nylogic.org}
\author{Stephanie Potter}
\address{Stephanie Potter, Boise State University, 1910 University Dr, Boise, ID 83725}
\email{stephaniepotter@boisestate.edu}
\subjclass[2010]{03E15, 05C63, 05C20}
\newtheorem{thm}{Theorem}[section]
\newtheorem{lem}[thm]{Lemma}
\newtheorem{prop}[thm]{Proposition}
\newtheorem{cor}[thm]{Corollary}
\theoremstyle{definition}
\newtheorem{defn}[thm]{Definition}
\theoremstyle{remark}
\newcommand{\Z}{\mathbb{Z}}
\newcommand{\Q}{\mathbb{Q}}
\newcommand{\N}{\mathbb{N}}
\newcommand{\F}{\mathbb{F}}
\newcommand{\Mod}{\text{Mod}}
\DeclareMathOperator{\Aut}{Aut}
\makeatletter\pretocmd{\@seccntformat}{\S}{}{}
  \pretocmd{\@subseccntformat}{\S}{}{}\makeatother
\begin{document}
\begin{abstract}
  We consider the classification problem for several classes of countable structures which are ``vertex-transitive'', meaning that the automorphism group acts transitively on the elements. (This is sometimes called homogeneous.) We show that the classification of countable vertex-transitive digraphs and partial orders are Borel complete. We identify the complexity of the classification of countable vertex-transitive linear orders. Finally we show that the classification of vertex-transitive countable tournaments is properly above $E_0$ in complexity.
\end{abstract}
\maketitle

\section{Introduction}

In this article we study countable structures $A$ with the property that the automorphism group $\Aut(A)$ acts transitively on $A$. We will say that such structures are \emph{vertex-transitive}, or \emph{VT}. In \cite{John}, Clemens showed that the classification of countable vertex-transitive graphs is just as complex as the classification of arbitrary countable graphs. Since the classification of countable graphs is known to be of maximal complexity among classes of countable structures, the same is true of countable vertex-transitive graphs. We will extend Clemens' investigation to include the cases of countable directed graphs, partial orders, linear orders, and tournaments.

In order to describe our results, we briefly introduce Borel complexity theory, an area of logic which provides a framework to compare relative complexities of classification problems. In this theory we regard a classification problem as an equivalence relation on a standard Borel space. For a general class of examples, let $L$ be a countable relational language and consider the classification of countable $L$-structures up to isomorphism. The underlying standard Borel space is $\Mod(L)$, consisting of the $L$-structures with underlying set $\N$, and the classification problem may be identified with the isomorphism equivalence relation $\cong$ on $\Mod(L)$. For a particular example, to study the classification of countable graphs we take $L$ to consist of a single binary relation symbol. Then $\Mod(L)=2^{\N^2}$, and the space of countable graphs is the Borel subset $X$ consisting of just the symmetric, reflexive binary relations. The classification of countable graphs may be identified with the isomorphism equivalence relation restricted to $X$.

In order to compare complexities of various classification problems, we will make use of the notion of Borel reducibility. Here, given two equivalence relations $E$ and $F$ on standard Borel spaces $X$ and $Y$ respectively, one says that $E$ is \emph{Borel reducible} to $F$, denoted $E\leq_B F$, if there exists a Borel function $f\colon X\rightarrow Y$ such that for all $x,x'\in X$ we have $x\mathrel{E}x'$ if and only if $f(x)\mathrel{F}f(x')$. When this is the case, we say intuitively that the classification up to $E$-equivalence is no more complex than the classification up to $F$-equivalence.

There is a maximum possible complexity among isomorphism equivalence relations on classes of countable structures. We say that an equivalence relation $E$ is \emph{Borel complete} if and only if for every every countable language $L$, the isomorphism equivalence relation $\cong$ on $\Mod(L)$ is Borel reducible to $E$. For example, the isomorphism equivalence relation on the class of countable connected graphs is Borel complete. The result from \cite{John} mentioned above states that the isomorphism relation on countable connected vertex-transitive graphs is Borel complete as well.

We should note that the class of countable vertex-transitive graphs is \emph{not} a Borel subset of $2^{\N^2}$, and in fact it is not a standard Borel space. While this means the classification of countable vertex-transitive graphs does not fit within the classical Borel complexity theory, it is still possible to ask which equivalence relations are reducible to it, and therefore whether it is Borel complete. We will also briefly consider the use of absolutely $\bm{\Delta}^1_2$ reduction functions, and in this context it is sufficient to work with a $\bm{\Sigma}^1_1$ domain.

In the next section, we will use Clemens' reasoning to extend his result and show that isomorphism of countable directed vertex-transitive graphs is Borel complete, and isomorphism of countable vertex-transitive partial orders is Borel complete. In the third section, we classify the countable vertex-transitive linear orders, and in particular show that there are just $\omega_1$ many isomorphism classes of such orders. We also use our classification to provide a lower bound on the complexity of the isomorphism relation on such linear orders. In the last section, we study the isomorphism classification problem for countable vertex-transitive tournaments. We show that the complexity of this classification is properly more complex than $E_0$, the eventual equality relation on $2^\omega$. However, the question of whether or not it is Borel complete remains open.

We remark that some authors refer to vertex-transitive structures as ``homogeneous'', but we avoid the term since it is also often used to mean ultrahomogeneous. Without going into detail, ultrahomogeneity is a very strong property: the classification of countable ultrahomogeneous structures is always smooth (reducible to the equality relation on $2^\omega$), and hence not interesting from the point of view of Borel reducibility theory.

\textbf{Acknowledgement.} This work represents a portion of the third author's master's thesis \cite{steph_thesis}. The thesis was completed at Boise State University under the supervision of the second author, with significant input from the first author.

\section{Graphs and partial orders}

In this section we revisit the result of Clemens in \cite{John} which states that the isomorphism relation on the class of countable vertex-transitive graphs is Borel complete. We will use the details of the proof of this theorem to show that the isomorphism relations on the classes of countable vertex-transitive directed graphs and countable vertex-transitive partial orders are Borel complete too.

In this article, a \emph{directed graph} will always mean an oriented simple graph, so that there are no duplicate edges and no self-edges. We say that a directed graph is \emph{weakly connected} if the corresponding unoriented graph is connected.

\begin{thm}
  \label{directed graphs}
  The isomorphism relation on countable weakly-connected vertex-transitive directed graphs is Borel complete.
\end{thm}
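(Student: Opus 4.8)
The plan is to reduce from the isomorphism relation on countable vertex-transitive graphs, which is Borel complete by Clemens' theorem in \cite{John}. Given such a graph $G$, I want to produce a countable weakly-connected vertex-transitive directed graph $D(G)$ in a Borel way so that $G\cong G'$ if and only if $D(G)\cong D(G')$. The basic difficulty is that a directed graph must orient each edge, and a naive orientation will typically destroy vertex-transitivity, since an automorphism of $G$ need not respect the chosen orientation. So the construction must encode the undirected adjacency relation using directed edges in a way that is canonical enough to be preserved by \emph{every} isomorphism, while remaining invariant under the automorphism group action.

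The approach I would take is a local gadget replacement. First I would replace each undirected edge of $G$ by a small directed gadget that is itself rigid (or has a controllable automorphism group) and that is asymmetric enough to record the edge unambiguously, yet symmetric enough that swapping its two endpoints extends to an automorphism of the gadget. A standard device is to subdivide each edge and attach an oriented configuration---for instance, replace each edge $\{u,v\}$ by a new vertex $w_{uv}$ together with directed edges forming a pattern (such as a directed path or a short oriented cycle through $u$, $w_{uv}$, $v$) chosen so that the ``edge vertices'' are distinguishable from the ``original vertices'' by their in/out-degree profile. I would then verify that $\Aut(D(G))$ still acts transitively on the vertex set: this is the crucial point, and it typically requires that the original and edge vertices lie in a single orbit, which one arranges by making the gadget act as an edge between two vertices of the same ``type,'' or by taking a further product/blow-up construction so that all vertices become interchangeable. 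Here I expect to lean on the specific features of Clemens' construction, since his graphs are built with enough homogeneity that the gadget replacement preserves vertex-transitivity.

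The key steps in order would be: (1) fix the Borel-complete family of vertex-transitive graphs from \cite{John} and recall exactly which symmetries witness vertex-transitivity; (2) define the edge-replacement gadget and the resulting map $G\mapsto D(G)$, checking it is Borel and that $D(G)$ is weakly connected whenever $G$ is connected; (3) show that any graph isomorphism $G\to G'$ induces a digraph isomorphism $D(G)\to D(G')$ by mapping edge vertices according to their edges; (4) conversely show that any digraph isomorphism $D(G)\to D(G')$ must send original vertices to original vertices and edge vertices to edge vertices (using the degree/orientation invariants built into the gadget), hence descends to a graph isomorphism $G\to G'$; and (5) verify vertex-transitivity of $D(G)$ by lifting the transitive action on $G$ together with the local symmetries of the gadget.

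The main obstacle will be step (5), reconciling the orientation with vertex-transitivity. Orienting edges breaks the edge-swapping symmetry that an undirected graph automorphism freely uses, so I must choose a gadget whose automorphism group still allows an endpoint swap (e.g.\ an oriented cycle admits a rotation that reverses the roles of the two endpoints) and ensure that the global action remains transitive across both vertex classes. If a direct gadget fails to make the two classes a single orbit, the fallback is to compose with an auxiliary transitive construction---for example, taking a suitable lexicographic or wreath-type product with a fixed vertex-transitive digraph---so that transitivity is restored without disturbing the reduction property established in steps (3) and (4). Once transitivity and the biconditional of steps (3)--(4) are secured, Borel completeness of the target class follows immediately from that of the source.
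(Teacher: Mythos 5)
Your proposal does not contain the key idea, and as written steps (4) and (5) contradict each other. In step (4) you require every isomorphism $D(G)\to D(G')$ --- in particular every automorphism of $D(G)$ --- to send original vertices to original vertices and edge vertices to edge vertices, because the gadget makes the two classes distinguishable by local invariants. In step (5) you require $\Aut(D(G))$ to act transitively on \emph{all} vertices of $D(G)$, which would produce an automorphism carrying an original vertex to an edge vertex. You cannot have both: any edge-subdivision gadget that lets you recover $G$ as a distinguished substructure necessarily splits the vertex set into at least two orbits (already for a vertex-transitive $G$ of degree $d$, the original vertices acquire degree on the order of $d$ while the gadget vertices have the small fixed degree of the gadget). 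Your proposed fallbacks do not repair this: a lexicographic or wreath-type product with a fixed vertex-transitive digraph is vertex-transitive only when the base already is, so it cannot merge the two orbits, and an ``endpoint-swapping'' symmetry of the gadget addresses only the edge-reversal issue, not the orbit-merging issue. The difficulty you correctly single out as the main obstacle is therefore left unresolved, and I do not see how to resolve it within the edge-replacement framework.

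The paper takes an entirely different route that sidesteps the orientation-versus-transitivity tension altogether. Given an arbitrary countable graph $G$ (not a vertex-transitive one), it forms the group $H=\F_\omega/N$, where $N$ is the normal subgroup generated by the commutators $g_ig_jg_i^{-1}g_j^{-1}$ for adjacent $v_i\sim_G v_j$, and takes $\Gamma$ to be the directed Cayley graph of $H$ on the generators $g_i$. Vertex-transitivity is then automatic, since left translation by $H$ acts transitively on any Cayley graph, and there is no partition of the vertices into ``original'' and ``gadget'' classes at all: the graph $G$ is recovered from the commutation pattern visible in the local structure around any vertex, not from a distinguished subset. If you want to salvage a proof, this is the mechanism to aim for --- make every vertex a group element so that transitivity comes for free, and encode the adjacency relation of $G$ in the relations of the group rather than in an orientation of the edges of $G$.
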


\begin{proof}
  We will show that there exists a Borel reduction from countable graphs to countable weakly-connected vertex-transitive directed graphs. We will provide key details of the construction of the reduction function here, since we will need these details in the rest of the section. We will omit the proof that the construction yields a Borel reduction, instead describing how it may be extracted from \cite{John}.

  Let $G$ be a given countable graph and denote its vertices $\langle v_i\rangle_{i\in\N}$. We let $H$ be the group generated freely by the vertices of $G$ with the stipulation that adjacent vertices commute. That is, if $\F_\omega$ denotes the free group on generators $g_i$, we let $N$ be the normal subgroup of $\F_\omega$ generated by $\{g_ig_jg_i^{-1}g_j^{-1}\mid v_i\sim_Gv_j\}$ and define $H=\F_\omega/N$.

  Finally we form $\Gamma$, the directed Cayley graph of $H$ with generators $\langle g_i\rangle_{i\in\N}$. The vertices of $\Gamma$ are left cosets of $N$ in $\F_\omega$. We put a directed edge from $w_1N$ to $w_2N$ in $\Gamma$ if $g_iw_1N=w_2N$ for some $i$. Then $\Gamma$ is vertex-transitive because it is the Cayley graph of a group.

  We remark that given the graph $G$, it is possible to produce a code for the directed graph $\Gamma$ in a Borel fashion. This completes the construction of the reduction function $G\mapsto\Gamma$. As we noted previously, it remains to verify that $G_1\cong G_2$ if and only if $\Gamma_1 \cong \Gamma_2$. This is similar to \cite[Theorem~3.2]{John}, with the simplification that in the directed case we have no need for an extensionality hypothesis on the graphs $G$. The details may be found in \cite{steph_thesis}.
\end{proof}

In the next result we will use the above construction to show that the class of countable vertex-transitive partial orders is Borel complete. For this we recall that if $\Gamma$ is a directed graph, then its \emph{transitive closure} $C(\Gamma)$ is the directed graph obtained from $\Gamma$ by adding an edge $x\to y$ whenever there exists a directed path from $x$ to $y$ in $\Gamma$. The following fact is standard, though we remind the reader that in this article our directed graphs have no self-edges $x\to x$ nor bidirectional edges $x\to y\to x$.

\begin{prop}
  \label{transitive closure}
  If $\Gamma$ is a directed graph with no directed cycles, then $C(\Gamma)$ is a partial order with respect to the relation $x<y$ iff $x\to y$ in $C(\Gamma)$.
\end{prop}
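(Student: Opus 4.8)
The plan is to verify that the relation $x<y$ is transitive and irreflexive; asymmetry then follows automatically, and these three properties are exactly what is required for a strict partial order. The first step is to record the concrete meaning of the closure relation: by the definition of $C(\Gamma)$, we have $x<y$ (that is, $x\to y$ in $C(\Gamma)$) if and only if there is a directed path $x=x_0\to x_1\to\cdots\to x_n=y$ in $\Gamma$ with $n\geq 1$. I will work throughout with this reformulation.

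For transitivity, suppose $x<y$ and $y<z$. Then I can fix a directed path from $x$ to $y$ and a directed path from $y$ to $z$, and concatenate them at $y$ to obtain a directed path from $x$ to $z$; hence $x<z$. This incidentally confirms that $C(\Gamma)$ is genuinely closed under the formation of paths, so that no further edges would be added by iterating the closure operation.

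For irreflexivity, suppose toward a contradiction that $x<x$ for some vertex $x$. Then there is a directed path from $x$ to itself of length at least $1$, which is precisely a directed cycle in $\Gamma$. This contradicts the hypothesis that $\Gamma$ has no directed cycles, so $x\not<x$ for every $x$. Finally, asymmetry is immediate: if we had both $x<y$ and $y<x$, concatenating the corresponding paths would again produce a closed directed path, i.e.\ a directed cycle, contradicting the hypothesis. Thus $<$ is irreflexive, transitive, and asymmetric, so $C(\Gamma)$ is a partial order.

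I do not anticipate a genuine obstacle here, as the argument is essentially bookkeeping with directed paths. The one point requiring a little care is the precise notion of \emph{directed path} underlying the closure: one should confirm that any closed directed walk of positive length contains a directed cycle (take a shortest closed subwalk), so that the contradiction in the irreflexivity and asymmetry steps goes through regardless of whether repeated vertices are permitted. The excluded degenerate cases---self-edges $x\to x$ and bidirectional edges $x\to y\to x$---are already ruled out by the standing conventions on $\Gamma$, and may be regarded as the length-$1$ and length-$2$ instances of the acyclicity hypothesis.
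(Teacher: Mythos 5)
Your proof is correct; the paper itself omits a proof, simply calling the fact standard, and your argument (transitivity by concatenation of paths, irreflexivity and asymmetry from acyclicity, with the observation that a closed walk of positive length contains a directed cycle) is exactly the routine verification being alluded to.
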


We now arrive at the following result concerning vertex-transitive partial orders.

\begin{thm}
  \label{po}
  The isomorphism relation on countable vertex-transitive partial orders is Borel complete.
\end{thm}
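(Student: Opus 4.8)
The plan is to compose the reduction $G\mapsto\Gamma$ from the proof of Theorem~\ref{directed graphs} with the transitive closure operation, producing the map $G\mapsto C(\Gamma)$. The group $H=\F_\omega/N$ is exactly the right-angled Artin group whose defining graph is $G$ (the generators commute precisely when the corresponding vertices are adjacent), and in $\Gamma$ there is an edge $x\to y$ iff $yx^{-1}$ is one of the generators $g_i$. Hence a directed path from $x$ to $y$ exists iff $yx^{-1}$ lies in the submonoid $H^+$ generated by the $g_i$, so by Proposition~\ref{transitive closure} the partial order $C(\Gamma)$ is the order in which $x<y$ iff $yx^{-1}$ is a nonempty positive word in the generators. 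First I would record two elementary facts about $H$: it carries a length homomorphism $\ell\colon H\to\Z$ with $\ell(g_i)=1$ (well defined since each relator $[g_i,g_j]$ maps to $0$), and its abelianization is the free abelian group $\Z^{(\omega)}$ on the classes of the $g_i$. A nonempty positive word has $\ell$-value at least $1$, so it is nontrivial; therefore $\Gamma$ has no directed cycle (a cycle would give a nontrivial positive word equal to the identity), and Proposition~\ref{transitive closure} indeed applies.

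Next I would check that $C(\Gamma)$ is a vertex-transitive partial order and that the map is Borel. The right translations $x\mapsto xh$ are automorphisms of $\Gamma$, act transitively on its vertices, and preserve reachability, so they remain automorphisms of $C(\Gamma)$; thus $C(\Gamma)$ is vertex-transitive (equivalently, the displayed order is right-invariant since $(yh)(xh)^{-1}=yx^{-1}$). Borelness is immediate: $G\mapsto\Gamma$ is Borel by Theorem~\ref{directed graphs}, and transitive closure is a countable union over path-lengths of Borel conditions. The forward direction of the reduction is also easy: if $G_1\cong G_2$ then $\Gamma_1\cong\Gamma_2$ by Theorem~\ref{directed graphs}, and any directed-graph isomorphism preserves reachability, hence induces an isomorphism $C(\Gamma_1)\cong C(\Gamma_2)$.

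The hard part will be the backward direction, namely showing that the transitive closure does not destroy the information about $G$, i.e.\ that $C(\Gamma_1)\cong C(\Gamma_2)$ implies $G_1\cong G_2$. The idea is to recover $G$ order-theoretically from $C(\Gamma)$. Fixing the identity $e$ as a base point (legitimate after precomposing a given isomorphism with a right translation so that base points match, using vertex-transitivity of $C(\Gamma_2)$), the elements covering $e$ are exactly the positive elements of $\ell$-value $1$, which are precisely the generators $g_i$; these biject with $V(G)$. To read off adjacency I would show that two distinct covers $g_i,g_j$ of $e$ have a common cover iff $g_i$ and $g_j$ commute, i.e.\ iff $v_i\sim_G v_j$: a common cover must have the form $y=g_kg_i=g_lg_j$, and applying the abelianization $H\to\Z^{(\omega)}$ forces the multisets $\{k,i\}$ and $\{l,j\}$ to coincide, whence (as $i\neq j$) $k=j$, $l=i$ and $g_ig_j=g_jg_i$; conversely $y=g_ig_j$ is such a common cover when they commute. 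Since an order isomorphism sending $e\mapsto e$ preserves covers and common covers, it restricts to a graph isomorphism $G_1\cong G_2$, completing the reduction. The main obstacle is thus entirely in establishing this recovery, and it reduces to the two stated abelianization facts about the right-angled Artin group $H$.
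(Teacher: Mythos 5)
Your proposal is correct in outline and shares the paper's two crucial ingredients: the exponent-sum homomorphism $\ell\colon H\to\Z$ showing $\Gamma$ is acyclic (so Proposition~\ref{transitive closure} applies), and the observation that the Cayley edges of $\Gamma$ can be recovered inside $C(\Gamma)$ as the covering pairs of the partial order (again via $\ell$: an equality $g_j=g_{i_1}\cdots g_{i_n}$ with $n>1$ would violate exponent sums). Where you diverge is the endgame. The paper applies this covering-relation observation at every vertex to reconstruct all of $\Gamma$ from $C(\Gamma)$, concluding $C(\Gamma_1)\cong C(\Gamma_2)\Rightarrow\Gamma_1\cong\Gamma_2$, and then simply cites the backward direction of Theorem~\ref{directed graphs} to get $G_1\cong G_2$. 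You instead work only at the identity and recover $G$ itself order-theoretically: covers of $e$ are the generators, and two of them are adjacent in $G$ iff they admit a common cover. Your route is more self-contained (it does not re-enter the black box of Theorem~\ref{directed graphs}), but it costs one extra fact that the paper's route never needs at this stage: your chain of equivalences ends with ``$g_ig_j=g_jg_i$ in $H$ iff $v_i\sim_G v_j$,'' and the nontrivial direction of this does \emph{not} follow from the two abelianization facts you list, since abelianization kills all commutators and so cannot distinguish commuting from non-commuting generators. You need, for instance, the retraction of $H$ onto the subgroup generated by $g_i,g_j$ obtained by killing the remaining generators: when $v_i\not\sim_G v_j$ the image is free of rank $2$, where the generators do not commute. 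With that one sentence supplied, your argument is complete; the common-cover computation itself (the multiset equality $\{k,i\}=\{l,j\}$ forcing $y=g_ig_j=g_jg_i$) is correct.
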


\begin{proof}
  We again show that there is a Borel reduction from countable graphs to countable vertex-transitive partial orders. Given a countable graph $G$, we begin by constructing the directed graph $\Gamma$ from the proof of Theorem~\ref{directed graphs}.

  We first claim that $\Gamma$ has no directed cycles. Indeed, recall that $\Gamma$ is the directed Cayley graph of a group $H=\F_\omega/N$, where $N$ is generated by commutators of generators of $\F_\omega$. This implies that every word $w$ in $N$ has the property that the sum of the exponents of the generators appearing in $w$ is equal to $0$. On the other hand if $\Gamma$ contained a directed cycle, one would be able to find a word $w\in N$ such that the sum of the exponents of the generators in $w$ is positive. This establishes the claim.

  It follows from Proposition~\ref{transitive closure} that the transitive closure $P=C(\Gamma)$ of $\Gamma$ is a partial order. This completes the construction of the desired reduction function $G\mapsto P$.

  As we already argued, if $G_1\cong G_2$, then $\Gamma_1\cong\Gamma_2$. Since the transitive closure is isomorphism invariant, it follows that $C(\Gamma_1)\cong C(\Gamma_2)$ and hence that $P_1\cong P_2$.

  For the converse, we first claim that given a graph $P=C(\Gamma)$ as constructed above, it is possible to recover the set of $b\in\Gamma$ such that $N\rightarrow b$ in $\Gamma$, where $N$ denotes the vertex corresponding to the identity element of the Cayley graph.

  For this claim, we show that the out-neighbors of $N$ are exactly the $b\in\Gamma$ such that $N\rightarrow b$ in $C(\Gamma)$, and there does not exist a directed path of length greater than one from $N$ to $b$ in $C(\Gamma)$. Indeed, if $N\to b$ and there additionally exists a directed path from $N$ to $b$ of length greater than one, then we would have $b=g_jN\text{ and } b = g_{i_1}\dots g_{i_n}N$, and so in $H$ we would have $g_{i_1}\dots g_{i_n}g_j^{-1}=1$. This contradicts the fact that words in $N$ must have the sum of the exponents of all generators equal to $0$, and completes the claim.

  Now if $P_1\cong P_2$, then by definition $C(\Gamma_1)\cong C(\Gamma_2)$. Then it follows from the last claim that $\Gamma_1\cong\Gamma_2$. As we previously noted, the latter implies that $G_1\cong G_2$. This concludes the proof that $G\mapsto P$ is a reduction from countable graphs to countable vertex-transitive partial orders.
\end{proof}

\section{Linear orders}

In this section we give a complete characterization of the vertex-transitive linear orders. We then use this characterization to help describe the Borel complexity of the classification of countable vertex-transitive linear orders. We refer the reader to \cite{LO} for some of the basic linear order theory that will be used in this section.

We begin by introducing a key notion from linear order theory. If $L$ is any linear ordering and $x\in L$, we define the \emph{condensation class} of $x$ in $L$ by
\[c(x)=\left\{\,y\in L\mid\text{there are finitely many elements of $L$ between $x$ and $y$}\,\right\}.
\]
The sets $c(x)$ are convex and form an equivalence relation $\sim$ on $L$. The quotient linear ordering $L'=L/\mathord{\sim}$ is called the \emph{condensation} of $L$.

The condensation procedure can be iterated in a natural way. If $\alpha=\beta+1$ and $L^{(\beta)}$ has been constructed, we define
\[c^\alpha(x)=\left\{\,y\in L\mid\text{there are finitely many elements of $L^{(\beta)}$ between $c^\beta(x)$ and $c^\beta(y)$}\,\right\}.
\]
If $\alpha$ is a limit ordinal, we define
\[c^\alpha(x)=\bigcup_{\beta<\alpha}c^\beta(x)
\]
In either case, we may again define the corresponding equivalence relation $\sim_\alpha$, and then define the quotient ordering $L^{(\alpha)}=L/\mathord{\sim}_\alpha$.

We will use the terminology that a point $x$ in a linear order $L$ is \emph{left dense} if there exists a sequence in $L$ converging to $x$ from below, and \emph{left discrete} otherwise. The next two lemmas collect some of the information we will need to characterize the vertex-transitive linear orders.

\begin{lem}
  \label{dense}
  Let $L$ be a countable vertex-transitive linear ordering.
  \begin{enumerate}[(i)]
  \item If any point of $L$ is left or right dense, then $L\cong\Q$.
  \item If any point of $L$ is left or right discrete, then either $L=1$ or else every condensation class of $L$ is a copy of $\Z$.
  \end{enumerate}
\end{lem}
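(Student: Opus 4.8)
The plan is to exploit the fact that vertex-transitivity forces the local order-type on each side of a point to be uniform across all of $L$: since every automorphism preserves the order, each of the properties ``left dense'', ``left discrete'', ``right dense'', and ``right discrete'' is invariant under $\Aut(L)$, and because $\Aut(L)$ acts transitively, whichever of these holds at one point holds at every point. I would use this repeatedly, together with the observation that an endpoint of $L$ (a minimum or a maximum) is unique and automorphism-invariant, so that if $L$ had an endpoint then transitivity would force every point to be that endpoint, and hence $|L|=1$. I will also use the working characterization, valid since $L$ is countable, that $x$ is left dense exactly when the set of its predecessors is nonempty and has no greatest element, and dually on the right.

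For part (i), I would first assume without loss of generality (reversing the order if necessary) that some point, hence every point, is left dense. Then no point is the minimum and no point has an immediate predecessor. I would next observe that no point can have an immediate successor either: if $y$ were the immediate successor of $x$, then $x$ would be the greatest predecessor of $y$, contradicting that $y$ has no immediate predecessor. Since a pair $x<y$ with nothing strictly between them would exhibit an immediate successor, this shows $L$ is densely ordered, while the endpoint remark rules out both a maximum and a minimum. Thus $L$ is a countable dense linear order without endpoints, and Cantor's theorem gives $L\cong\Q$.

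For part (ii), by the transitivity dichotomy I may assume every point is left discrete, the right-discrete case being symmetric under order reversal (and $\Z$ is self-reverse). If $|L|=1$ then $L=1$ and we are done, so suppose $|L|\geq 2$. The endpoint remark shows $L$ has no minimum, so left discreteness upgrades to the statement that every point has an immediate predecessor. The key step is to promote this to a two-sided statement: if some $x$ had no immediate successor, then since $L$ has no maximum the set of successors of $x$ would be nonempty with no least element, making $x$ right dense, hence every point right dense, hence $L\cong\Q$ by part (i)---contradicting left discreteness. With immediate predecessors and successors available everywhere, I can iterate the successor and predecessor operations to attach to each $x$ a bi-infinite strictly increasing chain $\{x^{(n)}:n\in\Z\}$, which is order-isomorphic to $\Z$.

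Finally I would identify this chain with the condensation class $c(x)$. One inclusion is immediate, since only finitely many points lie between $x$ and $x^{(n)}$. For the reverse, if $y\in c(x)$ with, say, $x<y$, then only finitely many points lie in the interval $[x,y]$; repeatedly passing to immediate successors starting at $x$ enumerates these one at a time, so $y=x^{(n)}$ for some $n$. Hence $c(x)=\{x^{(n)}:n\in\Z\}\cong\Z$, as required. I expect the main obstacle to be the step promoting one-sided discreteness to two-sided discreteness, since it is precisely there that the absence of endpoints and the classification in part (i) must be combined to exclude the mixed behavior of a point that is discrete on one side but dense on the other.
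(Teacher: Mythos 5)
Your proof is correct. Part (i) is essentially the paper's argument: transitivity propagates left-density to every point, this yields density of the order, and transitivity again excludes endpoints so that Cantor's theorem gives $\Q$. Part (ii), however, takes a genuinely different (and more hands-on) route to the conclusion that the condensation classes are copies of $\Z$. The paper quotes the standard fact that any condensation class is finite, $\omega$, $\omega^\ast$, or $\Z$, and then observes that the set of points which are least (resp.\ greatest) in their class is automorphism-invariant, so transitivity forbids nontrivial classes with endpoints; the mixed case of a point that is left discrete but right dense is silently absorbed there, since such a point would be the greatest element of its class. You instead build the class explicitly as a two-sided successor chain, which forces you to confront the mixed case head-on: your step that one-sided discreteness promotes to two-sided discreteness, obtained by feeding the hypothetical right-dense configuration back into part (i) and deriving $L\cong\Q$, is the extra ingredient the paper's condensation-class argument does not need. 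What your version buys is self-containedness (no appeal to the classification of condensation classes, and an explicit isomorphism $c(x)\cong\Z$); what the paper's version buys is brevity and a cleaner use of transitivity acting on invariant sets. Both arguments are sound; your identification of $c(x)$ with the chain $\{x^{(n)}:n\in\Z\}$ is carried out correctly in both inclusions.
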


\begin{proof}
  (i) Assume without loss of generality that some point is left dense. Since $L$ is vertex-transitive, every point is left dense. Now let $a<b$ be given. Then there is an increasing sequence $\{b_n\}_{n\in\N}$ such that $\sup(b_n)=b$, and for some $n$ large enough we have $a<b_n<b$. We have thus shown that $L$ is a dense linear order, and hence isomorphic to either $\Q$, $\Q\cup\{\infty\}$, $\{-\infty\}\cup\Q$, or $\{-\infty\}\cup\Q\cup\{\infty\}$. Since $L$ is vertex-transitive, it must be the case that $L\cong\Q$. 

  (ii) Assume without loss of generality that some point is left discrete. Since $L$ is vertex-transitive, every point is left discrete. Additionally assume that $L\neq1$. Then by vertex-transitivity, $L$ has no least element. The last two statements imply that every condensation class of $L$ is nontrivial. Thus every condensation class is either finite and of size at least two, a copy of $\omega$, a copy of $\omega^\ast$, or a copy of $\Z$. Using vertex-transitivity one last time, no nontrivial condensation class may have a least or greatest element. It follows that every condensation class is a copy of $\Z$.
\end{proof}

We will say that a linear order $L$ is a \emph{condensation fixed point} if $c(x)=\{x\}$ for all $x\in L$. Of course if $L$ is vertex-transitive, then it is equivalent to say that $c(x)=\{x\}$ for some $x\in L$.

\begin{lem}
  \label{fp}
  Let $L$ be a countable vertex-transitive linear order. If $L$ is a condensation fixed point, then either $L=1$ or $L\cong\Q$.
\end{lem}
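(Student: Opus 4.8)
The plan is to derive this as an immediate consequence of Lemma~\ref{dense} by a case split according to the density behavior at a single point. Assuming $L$ is nonempty, fix any point $x\in L$. Every point of $L$ is, on each side, either dense or discrete, so $x$ is either left or right dense, or else $x$ is both left discrete and right discrete; these two cases are exhaustive. By vertex-transitivity it makes no difference which point we examine, since if one point is left (respectively right) dense then so is every point, and likewise for the discrete alternative.

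First I would dispose of the dense case. If $x$ is left or right dense, then Lemma~\ref{dense}(i) applies directly and yields $L\cong\Q$, so we are finished. Notably, this case does not use the condensation fixed point hypothesis at all; it is the discrete case where that hypothesis pays off.

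In the remaining case $x$ is left discrete (and right discrete), so Lemma~\ref{dense}(ii) tells us that either $L=1$ or every condensation class of $L$ is a copy of $\Z$. This is exactly where the condensation fixed point assumption is decisive: by hypothesis $c(x)=\{x\}$ for all $x\in L$, so every condensation class is a singleton. Since a copy of $\Z$ is not a singleton, the second alternative of Lemma~\ref{dense}(ii) is impossible, and we conclude $L=1$.

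I do not expect a genuine obstacle here, as the statement is essentially a corollary of the preceding lemma. The only points requiring care are verifying that the case split is exhaustive—each point is dense or discrete on each side—and observing that the fixed point condition flatly contradicts the ``$\Z$ condensation class'' alternative, thereby eliminating it. One might additionally remark on the degenerate empty order, though the intended reading takes $L$ to be nonempty so that vertex-transitivity is substantive.
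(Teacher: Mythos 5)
Your proposal is correct and follows essentially the same route as the paper: both arguments reduce to the trichotomy supplied by Lemma~\ref{dense} and then rule out the ``every condensation class is a copy of $\Z$'' alternative because it contradicts $c(x)=\{x\}$. The only difference is that you spell out the exhaustiveness of the dense/discrete case split, which the paper leaves implicit.
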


\begin{proof}
  By Lemma~\ref{dense}, the only possibilities are $L=1$, $L\cong\Q$, or every condensation class is a copy of $\Z$. In either of the first two cases, we are done. In the third case, we would clearly have that that $c(x)\neq\{x\}$ for every $x\in L$, which is contrary to the assumption that $L$ is a condensation fixed point.
\end{proof}

Next we will need the following very special class of linear orderings, the lexicographic powers of $\Z$.

\begin{defn}
  For any ordinal $\alpha$, we define the set
  \begin{align*}
    \Z^\alpha=\left\{s\colon\alpha\to\Z\mid \left\{\beta<\alpha:s(\beta)\not=0\right\}\text{ is finite}\right\}
  \end{align*}
  We equip $\Z^\alpha$ with the \emph{reverse lexicographic} ordering defined as follows. Given $s,t\in\Z^\alpha$ such that $s\neq t$, let $\mu$ be the greatest ordinal such that $s(\mu)\neq t(\mu)$, and let $s<t$ iff $s(\mu)<t(\mu)$.
\end{defn}

The powers of $\Z$ have the following recursive characterization. If $\alpha$ is any ordinal, then $\Z^{\alpha+1}\cong \Z^\alpha\cdot\Z$, that is, $\Z$ many copies of $\Z^\alpha$. And if $\lambda$ is any limit ordinal, then
\[\Z^\lambda\cong\left(\textstyle\sum_{\alpha<\lambda}\Z^\alpha\cdot\omega\right)^\ast+1+\textstyle\sum_{\alpha<\lambda}\Z^\alpha\cdot\omega.
\]
In other words, to the right of the middle $1$ we see $\omega$, followed by $\omega$ many copies of $\Z$, followed by $\omega$ many copies of $\Z^2$, and so on. And to the left of the middle $1$ we see the same thing backwards.

We are finally ready to state the characterization of the vertex-transitive linear orders.

\begin{thm}
  If $L$ is a vertex-transitive linear order, then there exists an ordinal $\alpha$ such that $L$ is isomorphic to either $\Z^\alpha$ or $\Z^\alpha\cdot\Q$.
\end{thm}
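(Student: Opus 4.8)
The plan is to induct on the iterated condensation of $L$. First I would record two preliminary facts. Since the condensation equivalence relation $\sim$ is preserved by automorphisms, every automorphism of $L$ descends to an automorphism of the condensation $L'=L/\mathord{\sim}$, and transitivity on $L$ forces transitivity on the classes; hence each iterated condensation $L^{(\beta)}$ is again a countable vertex-transitive linear order. Second, for a countable linear order the tower $\langle L^{(\beta)}\rangle$ is eventually constant, so there is a least (necessarily countable) ordinal $\alpha$ for which $L^{(\alpha)}$ is a condensation fixed point; by Lemma~\ref{fp} this fixed point is either $1$ or $\Q$.

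The heart of the argument is the claim that, fixing any $x_0\in L$, one has $c^\beta(x_0)\cong\Z^\beta$ for every $\beta\le\alpha$, proved by transfinite induction on $\beta$. The base case $\beta=0$ is immediate since $c^0(x_0)=\{x_0\}=\Z^0$. For the successor step, note that when $\beta<\alpha$ the order $L^{(\beta)}$ is vertex-transitive but not a fixed point, so by Lemma~\ref{dense} it has no dense points and is not a single point; thus every one of its condensation classes is a copy of $\Z$. Consequently $c^{\beta+1}(x_0)$ is the union of $\Z$ many consecutive $c^\beta$-classes. By vertex-transitivity of $L$ all $c^\beta$-classes are isomorphic, hence each is $\cong\Z^\beta$ by the inductive hypothesis, and therefore $c^{\beta+1}(x_0)\cong\Z^\beta\cdot\Z\cong\Z^{\beta+1}$ by the recursive characterization of the powers of $\Z$.

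For the limit step I would exploit that, directly from the definition, $\Z^\lambda=\bigcup_{\gamma<\lambda}\Z^\gamma$ under the zero-extension embeddings, with the constant-zero sequence as a common basepoint. Since $c^\lambda(x_0)=\bigcup_{\gamma<\lambda}c^\gamma(x_0)$ is likewise an increasing union of convex sets all containing $x_0$, the goal is to assemble the isomorphisms $c^\gamma(x_0)\cong\Z^\gamma$ into a single isomorphism of the union. This is where the main care is required: I would construct the isomorphisms coherently, arranging that each sends $x_0$ to the zero sequence and that the isomorphism at stage $\gamma'$ restricts to the one at stage $\gamma<\gamma'$, so that their union is a well-defined isomorphism $c^\lambda(x_0)\cong\Z^\lambda$. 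Managing this coherence, rather than merely knowing each stage is abstractly isomorphic to $\Z^\gamma$, is the principal obstacle, and I expect it to require building the isomorphisms by recursion on $\beta$ simultaneously with the induction above.

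Finally I would assemble the conclusion. If $L^{(\alpha)}=1$, then $L=c^\alpha(x_0)\cong\Z^\alpha$ and we are done. If instead $L^{(\alpha)}\cong\Q$, then $L$ is partitioned into its $\sim_\alpha$-classes, each of which is $c^\alpha(y)\cong\Z^\alpha$ by vertex-transitivity together with the claim above, and these classes are arranged in the pattern of $L^{(\alpha)}\cong\Q$; hence $L\cong\sum_{q\in\Q}\Z^\alpha=\Z^\alpha\cdot\Q$, which completes the proof.
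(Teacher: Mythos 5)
Your proposal is correct and follows essentially the same route as the paper: both rest on Lemmas~\ref{dense} and~\ref{fp}, the stabilization of the iterated condensation at some least $\alpha$, and the recursive characterization of $\Z^\alpha$, the only difference being that you run the induction ``bottom-up'' on the classes $c^\beta(x_0)\cong\Z^\beta$ where the paper runs it ``top-down'' via $L\cong\Z\cdot L'$ iterated to $L\cong\Z^\alpha\cdot L^{(\alpha)}$. If anything, you are more explicit than the paper about the one genuinely delicate point, the coherence of the isomorphisms at limit stages, which the paper leaves implicit in the phrase ``using the reasoning of the previous paragraph inductively.''
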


\begin{proof}
  If $L$ is a condensation fixed point, we are done by Lemma~\ref{fp}. Otherwise, $L$ has nontrivial condensation classes, and Lemma~\ref{dense} implies that every condensation class is a copy of $\Z$. Using this, it is easy to build an isomorphism $L\cong \Z\cdot L'$.

  Now $L'$ is again vertex-transitive, and so we may iterate the observation. Formally, by \cite[Theorem~5.9]{LO}, if $L$ is a linear order of cardinality $\kappa$, there exists an ordinal $\alpha<\kappa^+$ such that $L^{(\alpha)}$ is a condensation fixed point. We may further assume that $\alpha$ is the least such ordinal. This means that for every $\beta<\alpha$, the ordering $L^{(\beta)}$ has nontrivial condensation classes. Using the reasoning of the previous paragraph inductively, we can conclude that $L\cong\Z^\alpha\cdot L^{(\alpha)}$.

  Finally, since $L^{(\alpha)}$ is a condensation fixed point, Lemma~\ref{fp} implies that either $L^{(\alpha)}=1$ or $L^{(\alpha)}\cong\Q$. It follows that either $L\cong\Z^\alpha$ or $L\cong\Z^\alpha\cdot\Q$, as desired.
\end{proof}

It follows that the isomorphism relation on countable vertex-transitive linear orders has just $\omega_1$ many classes in any forcing extension, and so it is not Borel complete (in the sense that no Borel complete equivalence relation is Borel reducible to it). On the other hand we can use the above characterization to provide some additional information on the complexity of the classification of countable vertex-transitive linear orders. First we recall several definitions from descriptive set theory.

As we have mentioned, the set of countable vertex-transitive linear orders is not Borel. Thus in order to compare it with another well-studied equivalence relation, we use a more general notion than Borel reducibility. We say that a function is \emph{absolutely $\boldsymbol{\Delta}_2^1$} if it admits $\boldsymbol{\Pi}_2^1$ and $\boldsymbol{\Sigma}_2^1$ definitions which are equivalent in all forcing extensions.

The equivalence relation with which we will be comparing is equivalence of codes for countable ordinals. That is, we let $E_{\omega_1}$ denote the isomorphism equivalence relation on the set of well-ordered binary relations on $\N$.

\begin{thm}
  There exists an absolutely $\boldsymbol{\Delta}_2^1$ reduction from $E_{\omega_1}$ to the isomorphism relation on the set of countable vertex-transitive linear orders.
\end{thm}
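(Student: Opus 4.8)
The plan is to send a code for a countable ordinal directly to the corresponding reverse-lexicographic power of $\Z$. Given a well-ordered relation $R$ on $\N$ with $\OT(R)=\alpha$, I would realize $\Z^\alpha$ concretely on a fixed Borel set of natural-number codes, namely the codes for finitely supported functions $s\colon\N\to\Z$, where the index set $\N$ is regarded as the ordinal $\alpha$ via $R$. Each such $s$ is coded by a single natural number, so the underlying set is a Borel subset of $\N$ (Borel-bijectable with $\N$). To order two distinct codes $s,t$, one locates the $R$-largest index $n$ with $s(n)\neq t(n)$ — a computation using only the finite supports of $s,t$ together with $R$ — and declares $s<_{L_R}t$ iff $s(n)<t(n)$ in $\Z$. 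By construction $L_R\cong\Z^\alpha$; since $\Z^\alpha$ is an ordered abelian group acting on itself by translation, $L_R$ is vertex-transitive, so it lands in the target class. Crucially, the defining formula for $<_{L_R}$ is arithmetic in $R$ and makes sense verbatim in any forcing extension, so the graph of $R\mapsto L_R$ is Borel, hence absolutely $\boldsymbol{\Delta}_2^1$.

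The forward direction is immediate and absolute: if $\pi$ witnesses $\OT(R)=\OT(R')$, then $s\mapsto s\circ\pi^{-1}$ is an order isomorphism $L_R\cong L_{R'}$. Thus the construction depends only on the order type, and $R\mathrel{E_{\omega_1}}R'$ implies $L_R\cong L_{R'}$.

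The reverse direction is the main content: I must show $\Z^\alpha\cong\Z^\beta\implies\alpha=\beta$, which I would do by extracting $\alpha$ from $\Z^\alpha$ as an isomorphism invariant, namely its \emph{condensation rank} — the least $\rho$ with $L^{(\rho)}$ a condensation fixed point. This is isomorphism-invariant because each $L^{(\rho)}$ is. I would prove by induction on $\alpha$, using the recursive characterization of the powers of $\Z$ displayed above, that the condensation rank of $\Z^\alpha$ is exactly $\alpha$, with $(\Z^\alpha)^{(\alpha)}\cong 1$ rather than $\Q$ (so that Lemma~\ref{fp} identifies the fixed point correctly). The successor step uses $\Z^{\alpha+1}\cong\Z^\alpha\cdot\Z$: condensation of a $\Z$-indexed sum of infinite blocks acts within each block, so one step reproduces $\Z^\alpha$-condensation inside each copy. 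The resulting distinct ranks give $\Z^\alpha\cong\Z^\beta\iff\alpha=\beta$, and combined with the forward direction this shows $R\mapsto L_R$ is a reduction; since both the iso-construction and the rank invariant are computed absolutely, the reduction property persists to all forcing extensions, as required for an absolutely $\boldsymbol{\Delta}_2^1$ reduction.

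I expect the limit stage of this induction to be the principal obstacle, and it must be handled carefully: abstractly one may have $(\Z^\lambda)^{(\beta)}\cong\Z^\lambda$ for every $\beta<\lambda$ (for instance $(\Z^\omega)'\cong\Z^\omega$), so the isomorphism type of the iterated condensation need \emph{not} witness any progress. The correct argument works with the actual nested subsets $c^\beta(x)\subseteq L$ rather than the abstract types: identifying $c^\beta(x)$ with the set of functions agreeing with $x$ on all coordinates $\geq\beta$, one checks that $(\Z^\lambda)^{(\beta)}$ still has nontrivial condensation classes for every $\beta<\lambda$ (a coordinate at index $\be<\lambda$ is not yet collapsed), while at stage $\lambda$ one has $c^\lambda(x)=\bigcup_{\beta<\lambda}c^\beta(x)=\Z^\lambda$ because any two finitely supported functions differ in only finitely many coordinates. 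This pins the least fixed-point ordinal at exactly $\lambda$ and prevents the rank from dropping below it.
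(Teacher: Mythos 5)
Your proposal is correct, and it takes a genuinely different route from the paper. The paper does not define $\Z^{\OT(R)}$ uniformly from the code $R$; instead it runs a transfinite recursion along the given well-order, passing from a code for $\Z^\beta$ to one for $\Z^{\beta+1}=\Z^\beta\cdot\Z$ at successors and using the decomposition $\Z^\lambda\cong\bigl(\sum_{\alpha<\lambda}\Z^\alpha\cdot\omega\bigr)^\ast+1+\sum_{\alpha<\lambda}\Z^\alpha\cdot\omega$ at limits; each step is Borel, and the recursion is implemented on an infinite time Turing machine, which is precisely why the paper only claims the function is absolutely $\boldsymbol{\Delta}^1_2$. Your single-shot realization of $L_R$ as the finitely supported $\Z$-valued functions on $(\N,R)$ with the $R$-reverse-lexicographic order is arithmetic in $R$, so the map is outright Borel (indeed continuous) on all of $2^{\N\times\N}$ and its absoluteness is automatic --- a strictly stronger conclusion obtained with less machinery. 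You also make explicit something the paper leaves tacit, namely the injectivity $\Z^\alpha\cong\Z^\beta\Rightarrow\alpha=\beta$; your condensation-rank argument is the right one, and your warning about the limit stage is well taken: since $(\Z^\omega)'\cong\Z^\omega$, the abstract isomorphism type of $L^{(\beta)}$ shows no progress, and one must track the concrete classes $c^\beta(x)$ (which for $\Z^\lambda$ are exactly the sets of functions agreeing with $x$ on coordinates $\geq\beta$) to see that the least fixed-point stage is $\lambda$ with $(\Z^\lambda)^{(\lambda)}=1$. The only minor points to tidy are the degenerate case where $R$ codes the ordinal $0$ (so $L_R=1$, still vertex-transitive) and the observation, which you already make, that vertex-transitivity of every $L_R$ comes from the translation action of the ordered abelian group structure.
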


\begin{proof}
  It suffices to show that there exists an absolutely $\boldsymbol{\Delta}^1_2$ function which maps a code for an ordinal $\alpha$ to a code for the linear ordering $\Z^\alpha$. This can be done by a recursive construction with the property that each step in the recursion is Borel.

  In detail, fix any binary relation $<_1$ with order type $\Z$. Given a code $<_\beta$ for the order type $\Z^\beta$, we can construct a code for $\Z^{\beta+1}=\Z^\beta\cdot\Z$ using the standard product construction. (Given natural numbers $n,m$, we write $n=\langle n_0,n_1\rangle$ and $m=\langle m_0,m_1\rangle$, where $\langle\cdot,\cdot\rangle$ is a pairing function. We then define $n<_{\beta+1}m$ if and only if $n_0 <_\beta m_0$ or $n_0=m_0$ and $n_1<_\beta m_1$.)

  Next, given a code for a limit ordinal $\lambda$, together with a $\lambda$-sequence of codes for $\Z^\beta$, $\beta<\lambda$, we can produce a code for $\Z^\lambda$. For this we use the previously mentioned property that $\Z^\lambda\cong\left(\textstyle\sum_{\alpha<\lambda}\Z^\alpha\cdot\omega\right)^\ast+1+\textstyle\sum_{\alpha<\lambda}\Z^\alpha\cdot\omega$, together with the natural constructions of ordinal-length products and sums.

  It is not difficult to see that both the successor step and the limit step described above may be carried out in a Borel fashion. It follows from this that one can construct the desired map in an absolutely $\boldsymbol{\Delta}_2^1$ fashion. For example, an infinite time Turing machine (ITTM) can easily be programmed to carry out the recursive construction, and ITTM-computable mappings are always absolutely $\boldsymbol{\Delta}_2^1$. (For the definition of ITTM and the statement of this fact, see \cite{ITTM}.)
\end{proof}

As we will discuss further in the next section, it follows from this result that there is no absolutely $\boldsymbol{\Delta}_2^1$ reduction from the isomorphism relation for vertex-transitive linear orders to any Borel equivalence relation.

\section{Tournaments}

In this section we study the classification of countable vertex-transitive tournaments. Recall that a \emph{tournament} is a directed graph with the property that for every pair $x,y$ of distinct vertices, there is either an edge $x\to y$ or an edge $y\to x$ and not both. In other words, a tournament is an oriented complete graph.

Since every linear order is a tournament with the edge relation $x\to y$ iff $x<y$, and every tournament is a directed graph, the work of the previous sections give lower and upper bounds on the complexity of the classification of countable vertex-transitive tournaments. The main result of this section gives an improvement on the lower bound.

Recall that $E_0$ is the equivalence relation defined on $2^\omega$ by $x\mathrel{E}_0y$ iff for all but finitely many $n$, $x(n)=y(n)$. We note that in the Borel reducibility hierarchy, $E_0$ lies properly above the equality relation $=$ on $2^\omega$, but ``just'' above in the sense that for any Borel equivalence relation $E$, either $E\leq_B\mathord{=}$ or else $E_0\leq_BE$ (see \cite{HKL}).

The equivalence relation $E_0$ has several other natural presentations which are equivalent up to Borel bireducibility. In particular we will use the equivalence relation $E_\Z$ defined on $2^\Z$ by $x\mathrel{E}_\Z y$ iff there exists $k$ such that $x(n+k)=y(n)$ for all $n$.

\begin{prop}
  For any comeager Borel subset $C$ of $2^\Z$, the restriction $E_\Z\restriction C$ is Borel bireducible with $E_0$.
\end{prop}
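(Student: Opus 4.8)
The plan is to prove the two reductions separately. The direction $E_\Z\restriction C\leq_B E_0$ is immediate: the inclusion $C\hookrightarrow 2^\Z$ is a Borel reduction of $E_\Z\restriction C$ to $E_\Z$, and $E_\Z$ is Borel bireducible with $E_0$ as noted above, so $E_\Z\restriction C\leq_B E_0$. For the reverse direction $E_0\leq_B E_\Z\restriction C$, my strategy is to reduce the problem to a non-smoothness statement and then invoke the Glimm--Effros dichotomy of \cite{HKL}. Since $E_\Z$ is the orbit equivalence relation of the continuous shift action of $\Z$ on $2^\Z$, it is Borel, and hence so is its restriction to the Borel set $C$. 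Thus it suffices to show that $E_\Z\restriction C$ is \emph{not} smooth; the dichotomy then yields $E_0\leq_B E_\Z\restriction C$ and, combined with the easy direction, the desired bireducibility.

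To set up the non-smoothness argument I would first replace $C$ by an invariant comeager set. Writing $\sigma$ for the shift and $M=2^\Z\setminus C$ for the meager complement, the saturation $[M]_{E_\Z}=\bigcup_{k\in\Z}\sigma^k(M)$ is a countable union of homeomorphic images of a meager set, hence meager. Therefore $C'=2^\Z\setminus[M]_{E_\Z}$ is an $E_\Z$-invariant comeager Borel set contained in $C$, and since $E_\Z\restriction C'\leq_B E_\Z\restriction C$ via inclusion, it is enough to prove non-smoothness of $E_\Z\restriction C'$. So I may assume that $C$ itself is $E_\Z$-invariant.

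The non-smoothness then follows from generic ergodicity together with the fact that every class is meager. The shift action is topologically transitive---any two basic clopen sets can be aligned by a suitable power of $\sigma$---so every $E_\Z$-invariant Borel set is meager or comeager, and because $C$ is invariant and comeager this property passes to $E_\Z\restriction C$. Moreover each $E_\Z$-class is a countable subset of the perfect space $2^\Z$, hence meager, and remains meager in the (still perfect) comeager set $C$. Now if $E_\Z\restriction C$ were smooth, witnessed by a Borel map $f\colon C\to 2^\omega$ with $x\mathrel{E_\Z}y\iff f(x)=f(y)$, then each preimage $f^{-1}(N_s)$ of a basic clopen set $N_s$, $s\in 2^{<\omega}$, would be invariant Borel, hence meager or comeager; selecting at each level the unique comeager piece produces a point $z\in 2^\omega$ on which $f$ is constant along a comeager set $D\subseteq C$. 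But then $D$ lies inside a single $E_\Z$-class, forcing that class to be comeager and contradicting that all classes are meager.

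The main obstacle, and the heart of the proof, is this final non-smoothness step: verifying generic ergodicity of the shift via topological transitivity, and arranging the category argument so that it survives the restriction to $C$. The reduction to the invariant core $C'$ is precisely the device that makes the transfer clean, since both generic ergodicity and meagerness of classes are inherited by invariant comeager subsets.
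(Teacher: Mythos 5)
Your proposal is correct and follows essentially the same route as the paper: the easy direction via $E_\Z\leq_B E_0$, and the hard direction by showing $E_\Z\restriction C$ is generically ergodic with all classes meager, hence non-smooth, and then applying the Glimm--Effros dichotomy. The only difference is that you unpack into explicit category arguments (including the useful invariant-core reduction) the facts the paper obtains by citing Gao's Propositions 6.1.9 and 6.1.10.
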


\begin{proof}
  It follows from \cite[Theorem~5.1]{DJK} that any equivalence relation which is given by the orbits of a $\Z$-action is Borel reducible to $E_0$. Thus $E_Z\upharpoonright{\mathcal{C}}$ is Borel reducible to $E_0$.
  
  Conversely, first note that $E_\Z$ has a dense orbit. It follows from \cite[Proposition~6.1.9]{Gao} that $E_\Z$ is \emph{generically ergodic}, meaning that every invariant Borel set is meager or comeager. Clearly, this implies $E_\Z\upharpoonright\mathcal{C}$ is generically ergodic as well. By \cite[Proposition 6.1.10]{Gao}, together with the fact that all equivalence classes of $E_\Z$ are meager, we obtain that $E_\Z\upharpoonright{\mathcal{C}}$ is not smooth. By the Glimm--Effros dichotomy \cite[Theorem~6.3.1]{Gao}, we conclude that $E_0$ is Borel reducible to $E_\Z\upharpoonright{\mathcal{C}}$.
\end{proof}

We are now ready to prove the main result of this section.

\begin{thm}
  There exists a Borel reduction from $E_0$ to the isomorphism relation on countable vertex-transitive tournaments.
\end{thm}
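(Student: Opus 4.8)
The plan is to use the preceding proposition to trade $E_0$ for the shift relation $E_\Z$, and then to build a shift-equivariant family of tournaments. Since $E_0$ and $E_\Z\restriction C$ are Borel bireducible for every comeager Borel $C\subseteq 2^\Z$, it suffices to exhibit, on a suitable comeager set of $x\in 2^\Z$, a Borel map $x\mapsto T_x$ sending each $x$ to a countable vertex-transitive tournament and satisfying $x\mathrel{E}_\Z y\iff T_x\cong T_y$. In other words, the whole burden is to construct a single family of vertex-transitive tournaments which remembers the bi-infinite sequence $x$ exactly up to translation.

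For the construction I would fix two distinct finite vertex-transitive tournaments $C_0$ and $C_1$ (of different sizes) and assemble $T_x$ from $\Z$-indexed copies of the $C_{x_n}$ by a reverse-lexicographic rule, in close analogy with the powers $\Z^\alpha$ of the previous section. The vertices of $T_x$ are the finite-support functions $s$ which select a point $s(n)\in C_{x_n}$ in each coordinate $n\in\Z$; given $s\neq t$, the largest coordinate $\mu$ with $s(\mu)\neq t(\mu)$ exists by finite support, and we orient the edge between $s$ and $t$ according to the orientation between $s(\mu)$ and $t(\mu)$ in the tournament $C_{x_\mu}$. Two features are then immediate. First, $T_x$ is vertex-transitive: if $s$ and $t$ differ in the finite coordinate set $F$, then choosing for each $n\in F$ an automorphism $\phi_n\in\Aut(C_{x_n})$ with $\phi_n(s(n))=t(n)$ (possible since $C_{x_n}$ is vertex-transitive) and the identity elsewhere yields, upon acting coordinatewise, an automorphism of $T_x$ carrying $s$ to $t$. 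Second, reindexing the coordinates gives an isomorphism $T_x\cong T_{\sigma^k x}$ for every $k$, so $x\mathrel{E}_\Z y$ already implies $T_x\cong T_y$. I would also record the bi-infinite tower of congruences $\theta_n$, where $s\sim_{\theta_n}t$ iff $s$ and $t$ agree on all coordinates $\geq n$: no single $\theta_n$ is either trivial or full, and the successive quotients $\theta_{n+1}/\theta_n$ recover the factors $C_{x_n}$.

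The hard part is the converse, that an isomorphism $T_x\cong T_y$ must come from a shift. The main obstacle is that the congruences $\theta_n$ were defined using the chosen coordinatization, whereas for the converse I need them to be \emph{intrinsic}, i.e.\ recoverable from the isomorphism type of $T_x$ alone. The strategy is to characterize the tower $(\theta_n)$ as a canonical, automorphism-invariant sequence of congruences described purely in terms of the tournament structure, so that an isomorphism $T_x\cong T_y$ must carry the tower of $T_x$ to that of $T_y$; since the tower has no distinguished coordinate, this matching is well-defined only up to a global shift of the index, which is exactly $E_\Z$. Taking $C_0$ and $C_1$ of different sizes makes the factor at each level detectable from the fibers of the congruence, and restricting to a comeager set of aperiodic, sufficiently generic $x$ is what I would use to rule out the accidental symmetries that could otherwise collapse distinct levels or produce spurious identifications. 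Verifying that this tower is genuinely isomorphism-invariant, rather than an artifact of the coordinates, is the step I expect to require the most care; once it is in hand, the equivalence $T_x\cong T_y\iff x\mathrel{E}_\Z y$ follows on the comeager set, and checking that $x\mapsto T_x$ is Borel is routine.
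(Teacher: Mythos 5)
Your construction is genuinely different from the paper's, and the forward half of it is sound: the reverse-lexicographic $\Z$-indexed product of finite vertex-transitive tournaments is a countable tournament, your coordinatewise-automorphism argument does establish vertex-transitivity, and reindexing gives $x\mathrel{E}_\Z y\Rightarrow T_x\cong T_y$. (Minor point: finite vertex-transitive tournaments exist only in odd orders, but the $3$-cycle and the rotational tournament on $\Z/5\Z$ serve as $C_0$ and $C_1$.) The problem is that the entire difficulty of the theorem lives in the converse, and there you have only announced a strategy --- ``characterize the tower $(\theta_n)$ as canonical and automorphism-invariant'' --- without giving the argument. As written this is a genuine gap, not a routine verification. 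The natural way to close it is via modules (intervals) of tournaments: each $\theta_n$-class is a module, and you would need to show that the \emph{strong} modules of $T_x$ are exactly the $\theta_n$-classes together with the singletons and the whole set, so that the chain of strong modules through a point is a $\Z$-chain whose order-isomorphisms are shifts, and the non-isomorphic quotients $C_{x_n}$ then pin down $x$ up to translation. For that you need the factors to be \emph{prime} (modularly indecomposable), and choosing them merely ``of different sizes'' is not enough: the lexicographic square of the $3$-cycle is a $9$-vertex vertex-transitive tournament which is itself decomposable, and using such a factor would introduce spurious levels in the tower. You would also have to rule out modules that overlap the $\theta_n$-classes without being unions of them (uniformity of a stray vertex against a candidate module is only a pairwise condition and does not instantly force saturation), and handle the fact that the tower has no top or bottom level. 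None of this is impossible --- I believe your construction can be pushed through, and genericity of $x$ is likely not even needed once the factors are prime and non-isomorphic --- but it is precisely the part you have not done.

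For comparison, the paper avoids modular-decomposition considerations entirely by working with a concrete tournament on $\Z\times\Z$: columns are ordered internally by $>$, edges between adjacent columns encode the digits of $x$, and all longer-range edges point rightward. The converse is then obtained by hand on a comeager set $C$: two genericity conditions on $x$ guarantee that the vertices forming a three-cycle with $v$ are exactly the five columns nearest $v$, from which each individual column, and then $x$ up to shift, can be read off. That argument is more computational but entirely elementary, and it makes the need for the comeager restriction explicit and verifiable; your approach, if completed, would trade those computations for a structural rigidity lemma about lexicographic products of prime tournaments.
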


\begin{proof}
  By the proposition, it is sufficient to find a comeager subset $C\subset2^\Z$ and a Borel mapping $x\mapsto T_x$ from $C$ to countable vertex-transitive tournaments such that $x\mathrel{E}_\Z x'$ iff $T_x\cong T_{x'}$. We will begin by defining the mapping $x\mapsto T_x$ from $2^\Z$ to countable tournaments, and will define the appropriate comeager set $C$ midway through the proof.
  
  Given an element $x\in2^\Z$, we define a tournament $T_x$ on the vertex set $\Z\times\Z$ as follows. Given distinct vertices at positions $(m,n)$ and $(m',n')$ where $m,n,m',n'\in\Z$, we let $(m,n)\rightarrow (m',n')$ iff:
  \begin{itemize}
  \item $m=m'$ and $n>n'$; or
  \item $m'=m+1$ and $x(n'-n)=1$; or
  \item $m'\geq m+2$.
  \end{itemize}
  In all other cases we put an edge $(m',n')\rightarrow (m,n)$. Thus the digits of $x$ are coded into the edges between every pair of adjacent columns. See Figure~\ref{fig:edges} for an illustration.

  \begin{figure}[ht]
    \begin{center}
      \begin{tikzpicture}
        \node at (0,1.8) {$\vdots$};
        \node[shape=circle, fill=black, inner sep=0pt, minimum size=.4em, draw] (a) at (0,1) {};
        \node[shape=circle, fill=black, inner sep=0pt, minimum size=.4em, draw] (b) at (0,.5) {};
        \node[label=left:{$(0,0)$}, shape=circle, fill=black, inner sep=0pt, minimum size=.4em, draw] (c) at (0,0) {};
        \node[shape=circle, fill=black, inner sep=0pt, minimum size=.4em, draw] (d) at (0,-.5) {};
        \node at (0,-1.8) {$\vdots$};
        \node at (2,1.8) {$\vdots$};
        \node[shape=circle, fill=black, inner sep=0pt, minimum size=.4em, draw] (e) at (0,-1) {};
        \node[label=right:{$(1,2)$}, shape=circle, fill=black, inner sep=0pt, minimum size=.4em, draw] (f) at (2,1) {};
        \node[label=right:{$(1,1)$}, shape=circle, fill=black, inner sep=0pt, minimum size=.4em, draw] (g) at (2,.5) {};
        \node[label=right:{$(1,0)$}, shape=circle, fill=black, inner sep=0pt, minimum size=.4em, draw] (h) at (2,0) {};
        \node[label=right:{$(1,-1)$}, shape=circle, fill=black, inner sep=0pt, minimum size=.4em, draw] (i) at (2,-.5) {};
        \node[label=right:{$(1,-2)$}, shape=circle, fill=black, inner sep=0pt, minimum size=.4em, draw] (j) at (2,-1) {};
        \node at (2,-1.8) {$\vdots$};

        \draw[->,>=latex] (c) to (f);
        \draw[->,>=latex] (g) to (c);
        \draw[->,>=latex] (h) to (c);
        \draw[->,>=latex] (c) to (i);
        \draw[->,>=latex] (c) to (j);
      \end{tikzpicture}\\
      \caption{Coding the digits of $x$ into the edges between adjacent columns. In this figure we show just five edges between $(0,0)$ and the column to its right. Here we have $x(-2)=1$, $x(-1)=1$, $x(0)=0$, $x(1)=0$, and $x(2)=1$.\label{fig:edges}}
    \end{center}
  \end{figure}
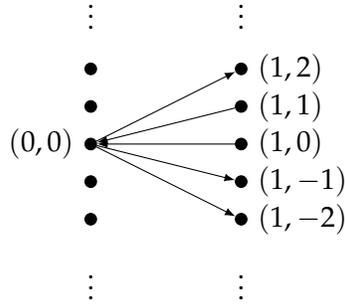

  It is not difficult to verify that the map $(a,b)\mapsto(a+m,b+n)$ preserves the definition of the edge relation in $T_x$ given above, and therefore $T_x$ is vertex-transitive. We now begin our verification that $x\mathrel{E}_\Z x'$ iff $T_x\cong T_{x'}$, though as noted above, this will only be true on a comeager set $C$ to be defined.

  First, suppose that $x\mathrel{E}_\Z x'$. That is, there exists some $k\in\Z$ such that, for every $n\in\Z$, $x(n)=x'(n+k)$. Then it is routine to verify using the definition that the map $\varphi(m,n)=(m,n+km)$ carries the edges of $T_x$ to the edges of $T_{x'}$, and hence witnesses that $T_x\cong T_{x'}$.


  For the other direction, we need to show that $T_x\cong T_{x'}$ implies $x\mathrel{E}_\Z x'$. Intuitively, we should be able to recover the shift-equivalence class of $x$ from the isomorphism class of $T_x$. To do so, we require $x$ and $x'$ have sufficiently many $0$ and $1$ values. To be precise, we let $C$ be the set consisting of all $z\in2^\Z$ satisfying the conditions:
  \begin{enumerate}[(i)]
  \item For every $n\neq0$ there exists some $k<n$ such that $z(k-n)=1$ and $z(k)=0$



  \item For every $n$ there exists some $k$ such that $z(-k)=0$ and $z(k-n)=0$

  \end{enumerate}
  Note that both of these conditions are $G_\delta$ and dense in $2^\Z$, and it follows that $C$ is comeager. Moreover if $z\in C$, then we can recover a number of properties of $T_z$ just from its isomorphism equivalence class. For instance, let $v$ be a vertex of $T_z$, and let $S_v$ be the set consisting of the column of $v$ together with the two columns to the left and two columns to the right of $v$.

  We claim that $S_v$ may be identified as the set of vertices that are involved in a three-cycle with $v$. To see this, suppose without loss of generality that $v=(0,0)$, and consider a second vertex $w$. In the case that $w=(0,n)$ is in the same column as $v$ with $n<0$ then condition~(i) ensures that there is a vertex $u=(1,k)$ such that $w\rightarrow u$ and $u\rightarrow v$. If $n>0$ we can exchange the roles of $v$ and $w$ to obtain the same conclusion. Condition~(i) can similarly be used if $w=(-1,n)$ or $w=(1,n)$, for $n\in\Z$. On the other hand, if $w=(-2,n)$ or $w=(2,n)$ then condition~(ii) ensures that there exists a third vertex $u$ such that $w\rightarrow u$ and $u\rightarrow v$.

  On the other hand if $w\notin S_v$ then $w$ cannot possibly be involved in a three-cycle with $v$. This follows from the fact that if an edge points ``left'' (i.e., from a higher-indexed column to a lower-indexed column) then its source and target are just one column apart. Thus if $w$ is at least three columns to the right of $v$, then one can have neither $w\to v$ nor $w\to u\to v$. And if $w$ is at least three columns to the left of $v$ then one can have neither $v\to w$ nor $v\to u\to w$. In each case $w$ is not involved in a three-cycle with $v$, completing the claim.
 
  Next, we can recover each of the five columns within $S_v$. Letting $C_{i,v}$ denote the column $i$ units to the right of $v$, we have:
  \begin{itemize}
  \item $C_{0,v}$ (the column of $v$) consists of all $w\in S_v$ such that $S_w=S_v$;
  \item $C_{-2,v}$ consists of all $w\in S_v$ such that there is no edge from $v$ to any element in $C_{0,w}$;
  \item $C_{2,v}$ consists of all $w\in S_v$ such that $v$ is in $C_{-2,w}$;
  \item $C_{-1,v}$ consists of all $w\in S_v$ such that $w\not\in C_{-2,v}$ and every $u\in C_{2,v}$ is not in $S_w$;
  \item $C_{1,v}$ consists of all vertices $w\in S_v$ such $v\in C_{-1,w}$.
  \end{itemize}

  Now from an isomorphic copy of $T_z$, we can recover $z$ up to shift equivalence as follows. Fix any $v$ and identify the column $C_{1,v}$. Then the $\to$ relation on $C_{1,v}$ is a linear order with order type $\Z$. We can thus identify $z$ up to shift equivalence simply by reading the edges between $v$ and the vertices $w\in C_{1,v}$.

  To conclude, suppose that $T_x\cong T_{x'}$. As outlined above, we may assume that both $x,x'\in C$. Let $\varphi$ be an isomorphism from $T_x$ to $T_{x'}$ and fix the vertex $v=(0,0)$. Using vertex-transitivity we may assume that $\varphi(v)=v$. Then using the fifth bullet point above, we have that $\varphi$ maps $C_{1,v}$ to $C_{1,v}$. Moreover, $\varphi$ preserves the $\Z$-order structure inherited from $\to$, and thus is simply a shift of $C_{1,v}$. Thus the edges from $v$ to vertices in $C_{1,v}$ in $T_x$ are a shift of the edges from $\varphi(v)$ to vertices in $C_{1,\varphi(v)}$ in $T_{x'}$. Since the digits of $x$ and $x'$ used to construct $T_x$ and $T_{x'}$ can be read from these edges, it must be the case that $x$ is a shift of $x'$. Thus we have $x\mathrel{E}_\Z x'$, as desired.
\end{proof}

Using the theorem together with the results of the previous section, we obtain the following.

\begin{cor}
  The isomorphism relation on countable vertex-transitive tournaments is strictly above $E_0$ in complexity.
\end{cor}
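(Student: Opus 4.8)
The plan is to combine the two halves of the statement: the reduction $E_0\leq_B{}$(isomorphism of countable vertex-transitive tournaments) is exactly the content of the preceding theorem, so to say that the latter relation is \emph{strictly} above $E_0$ it remains only to show that the reverse reduction fails, i.e.\ that isomorphism of countable vertex-transitive tournaments is not Borel reducible to $E_0$. I would obtain this by exploiting the vertex-transitive linear orders as a definable subclass of the vertex-transitive tournaments, thereby importing the complexity results of the previous section.

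First I would make precise the interpretation of linear orders as tournaments. Given a vertex-transitive linear order $L$, form the tournament $T(L)$ on the same underlying set with $x\to y$ iff $x<_L y$. I would check that $L\mapsto T(L)$ is Borel and that it is a reduction from isomorphism of vertex-transitive linear orders to isomorphism of vertex-transitive tournaments: a bijection is a tournament isomorphism of $T(L_1)$ onto $T(L_2)$ precisely when it is an order isomorphism of $L_1$ onto $L_2$, since the edge relation of $T(L)$ is definable from $<_L$ and conversely. The same mutual definability shows $\Aut(T(L))=\Aut(L)$, so $T(L)$ is again vertex-transitive. Hence isomorphism of vertex-transitive linear orders is Borel reducible to isomorphism of vertex-transitive tournaments.

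Now I would argue by contradiction. Suppose that isomorphism of vertex-transitive tournaments were Borel reducible to $E_0$. Composing with the reduction of the previous paragraph would yield a Borel reduction from isomorphism of vertex-transitive linear orders to $E_0$. Since $E_0$ is a Borel equivalence relation and every Borel function is in particular absolutely $\boldsymbol{\Delta}^1_2$, this would contradict the remark concluding the previous section, namely that there is no absolutely $\boldsymbol{\Delta}^1_2$ reduction from isomorphism of vertex-transitive linear orders to any Borel equivalence relation (which itself follows from the absolutely $\boldsymbol{\Delta}^1_2$ reduction of $E_{\omega_1}$ into that relation, together with the fact that $E_{\omega_1}$ admits no such reduction into a Borel target). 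Combined with the preceding theorem, this gives that isomorphism of vertex-transitive tournaments is strictly above $E_0$ in complexity.

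I do not anticipate a serious obstacle: the real content is already carried by the preceding theorem and by the Section~3 non-reducibility result, and the only thing left to verify is that the passage from linear orders to tournaments is a genuine Borel reduction preserving vertex-transitivity. The one point deserving care is confirming that isomorphisms of the induced tournaments cannot be anything other than order isomorphisms, so that the map reflects as well as preserves isomorphism; but this is immediate from the mutual definability of $\to$ and $<_L$.
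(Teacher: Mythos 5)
Your proposal is correct and follows essentially the same route as the paper: both obtain the lower bound from the preceding theorem and rule out a reduction to $E_0$ by viewing vertex-transitive linear orders as tournaments and invoking the absolutely $\boldsymbol{\Delta}^1_2$ reduction of $E_{\omega_1}$ together with the fact that $E_{\omega_1}$ admits no such reduction to a Borel equivalence relation. Your extra verification that $L\mapsto T(L)$ is a Borel reduction preserving vertex-transitivity is a detail the paper compresses into ``since linear orders are tournaments,'' but the substance is identical.
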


\begin{proof}
  First, recall the existence of an absolutely $\boldsymbol{\Delta}_2^1$ reduction from $E_{\omega_1}$ to the isomorphism relation on vertex-transitive linear orders. Since linear orders are tournaments, it follows that there is an absolutely $\boldsymbol{\Delta}_2^1$ reduction from $E_{\omega_1}$ to the isomorphism relation on vertex-transitive tournaments.

  However, there is no absolutely $\boldsymbol{\Delta}_2^1$ reduction from $E_{\omega_1}$ to any Borel equivalence relation. Indeed, if there were such a reduction $f$, then it would be possible to find an absolutely $\bm{\Delta}^1_2$ injection $F$ from codes for ordinals to codes for sets of reals of bounded Borel rank. (In fact one can take $F(x)$ to be a code for $[f(x)]_E$.) However, this contradicts the remark at the end of \cite[\S3]{hjorth}, which states that no such mapping exists.
  
  It follows from this that the isomorphism relation for vertex-transitive graphs is properly above $E_0$ in complexity, as claimed.
\end{proof}

We close with the question of whether there exists a Borel reduction from a Borel complete equivalence relation to the isomorphism relation on countable vertex-transitive tournaments.

\bibliographystyle{alpha}
\bibliography{bib}

\begin{thebibliography}{HKL90}

\bibitem[Cle09]{John}
John~D. Clemens.
\newblock Isomorphism of homogeneous structures.
\newblock {\em Notre Dame J. Form. Log.}, 50(1):1--22, 2009.

\bibitem[DJK94]{DJK}
R.~Dougherty, S.~Jackson, and A.~S. Kechris.
\newblock The structure of hyperfinite {B}orel equivalence relations.
\newblock {\em Trans. Amer. Math. Soc.}, 341(1):193--225, 1994.

\bibitem[Gao09]{Gao}
Su~Gao.
\newblock {\em Invariant descriptive set theory}, volume 293 of {\em Pure and
  Applied Mathematics (Boca Raton)}.
\newblock CRC Press, Boca Raton, FL, 2009.

\bibitem[Hjo98]{hjorth}
Greg Hjorth.
\newblock An absoluteness principle for {B}orel sets.
\newblock {\em J. Symbolic Logic}, 63(2):663--693, 1998.

\bibitem[HKL90]{HKL}
L.~A. Harrington, A.~S. Kechris, and A.~Louveau.
\newblock A {G}limm-{E}ffros dichotomy for {B}orel equivalence relations.
\newblock {\em J. Amer. Math. Soc.}, 3(4):903--928, 1990.

\bibitem[HL00]{ITTM}
Joel~David Hamkins and Andy Lewis.
\newblock Infinite time {T}uring machines.
\newblock {\em J. Symbolic Logic}, 65(2):567--604, 2000.

\bibitem[Pot17]{steph_thesis}
Stephanie Potter.
\newblock On the classification of vertex-transitive structures.
\newblock Master's thesis, Boise State University, Boise, ID, 2017.

\bibitem[Ros82]{LO}
Joseph~G. Rosenstein.
\newblock {\em Linear orderings}, volume~98 of {\em Pure and Applied
  Mathematics}.
\newblock Academic Press, Inc. [Harcourt Brace Jovanovich, Publishers], New
  York-London, 1982.

\end{thebibliography}

\end{document}